\numberwithin{equation}{section}
\newtheorem{thm}{Theorem}[section]
\newtheorem{lem}[thm]{Lemma}
\newtheorem{pro}[thm]{Proposition}
\theoremstyle{remark}
\newtheorem*{notation}{Notation}
\newtheorem{rem}[thm]{Remark}
\newtheorem*{ack}{Acknowledgement}
\newcommand{\Q}{\mathbb{Q}}
\newcommand{\Z}{\mathbb{Z}}
\newcommand{\Hom}{\operatorname{Hom}}
\begin{document}
\title[Homology of even Artin groups]{
The second integral homology of even Artin groups}
\author[T. Akita]{Toshiyuki Akita}
\address{Department of Mathematics, Faculty of Science, Hokkaido University,
Sapporo, 060-0810 Japan}
\email{akita@math.sci.hokudai.ac.jp}
\keywords{even Artin group, even Coxeter group, group homology}
\subjclass[2020]{Primary~20F36,20F55,20J06; Secondary~55N45}

\maketitle

\begin{abstract}
In this paper, we compute the second integral homology of even Artin groups using Hopf's formula. We then apply our results to the computation of cup products and Pontryagin products on even Artin groups, as well as to the second integral homology of even Coxeter groups.
\end{abstract}

\section{Introduction}
Artin groups are important objects in various areas of mathematics, 
including algebraic and geometric topology, combinatorial and geometric group theory, 
hyperplane arrangements, algebraic geometry, 
singularity theory, and representation theory.
For an overview of Artin groups and the $K(\pi, 1)$ conjecture---which 
remains the most important open problem concerning Artin groups---see 
the survey articles by Paris \cites{MR2497781,MR3205598,MR3207280}.
Recently, \emph{even Artin groups}---including the well-known class of  
right-angled Artin groups---have been attracting attention from several mathematicians.
For a comprehensive overview of even Artin groups, 
see the thesis of Blasco-Garc\'{\i}a \cite{garcia-thesis}.

For each Artin group, Charney and Davis \cite{MR1368655}, and independently Salvetti \cite{MR1295551}, constructed a finite CW complex known as the Salvetti complex.
The $K(\pi,1)$ conjecture is equivalent to the assertion that 
this complex serves as a classifying space for the corresponding Artin group.
Without assuming the $K(\pi,1)$ conjecture, computing the homology of Artin groups in general is difficult.
The strongest known result in this direction is the computation of the second mod~$2$ homology for \emph{all} Artin groups, achieved by the author and Liu \cite{MR3748252}. This was accomplished using Hopf's formula \cite{MR6510} together with a result of Howlett \cite{MR966298} on the second integral homology of Coxeter groups.

The focus of this paper is the computation of the second integral homology of even Artin groups.
Charney \cite{MR2058510} (see also Blasco-Garc\'{\i}a \cite{garcia-thesis})
proved that the $K(\pi,1)$ conjecture holds for even Artin groups. 
Consequently, the associated Salvetti complexes provide classifying spaces for these groups. 
This result enables us to compute their second integral homology via the homology of the Salvetti complexes, as done by Clancy and Ellis \cite{MR2672155}, who computed
the second integral homology of Salvetti complexes for \emph{all} Artin groups.

In this paper, we adopt a different approach.
We compute the second integral homology of even Artin groups by determining an explicit basis using Hopf's formula (Theorem~\ref{thm:main}).
Our method is purely algebraic and offers several advantages.
Namely, as applications of Theorem~\ref{thm:main}, 
we establish the following results for any even Artin group~$A$:  
(1) We determine the cup product map
$\smile\colon H^1(A)\otimes_\Z H^1(A)\to H^2(A)$
(Theorem~\ref{thm:cup-prod}).
(2) We express each homology class in a basis of $H_2(A)$
as a Pontryagin product of elements of $A$
(Theorem~\ref{thm:Pontryagin}).
Both results are obtained through purely algebraic arguments.

The paper is organized as follows.
In Section~\ref{sec:Artin}, we recall the definitions of Artin groups and Coxeter groups, including even Artin groups and even Coxeter groups.
In Section~\ref{sec:Hopf}, we recall Hopf's formula and  its consequence.
Theorems~\ref{thm:main}, \ref{thm:cup-prod}, and \ref{thm:Pontryagin} are proved in Sections~\ref{sec:H2-of-Artin}, \ref{sec:cup}, and \ref{sec:Pontryagin}, respectively.
In the final section, Section~\ref{sec:Coxeter}, we apply our results to even Coxeter groups.
As a result, for any even Coxeter group $W$,
we obtain an explicit basis for $H_2(W)$, 
which is an elementary abelian $2$-group, and express the basis elements as Pontryagin products of elements of $W$.

\begin{notation}
Let \( G \) be a group, and let \( g, h \in G \). For any natural number \( m \geq 2 \), define
\[
(gh)_m \coloneqq \underbrace{ghgh\cdots}_{m \text{ letters}} \in G.
\]
For example,
$(gh)_2 = gh$, $(gh)_3 = ghg$, $(gh)_4 = ghgh$, and so on.
Let \( N \) be a normal subgroup of \( G \). For \( g, h \in G \), we write
\[
g \equiv h \mod N
\]
if and only if \( gN = hN \) in \( G/N \). We denote the coset \( gN \) by 
\[g \bmod N.\]
Finally, throughout this paper, all homology and cohomology groups are taken with coefficients in~$\mathbb{Z}$,
unless stated otherwise.
\end{notation}

\section{Artin groups and Coxeter groups}\label{sec:Artin}
Let $n$ be a positive integer and define $[n]\coloneqq \{1,2,\dots,n\}$.
A \emph{Coxeter matrix} over $[n]$ is a symmetric $n\times n$ matrix
$M=(m(i,j))_{i,j\in [n]}$ such that 
\begin{itemize}
\item $m(i,i)=1$ for all $i\in [n]$,
\item $m(i,j)=m(j,i)\in \{2,3,4,\dots\}\cup\{\infty\}$ for all $i\not= j$.
\end{itemize}
Given a Coxeter matrix $M$,
the \emph{Artin group} $A_M$ associated with $M$ is
the group generated by elements $a_i$ $(i\in [n])$, subject to 
the defining relations
\begin{equation}\label{eq:rel-Artin0}
(a_{i}a_{j})_{m(i,j)}=(a_{j}a_{i})_{m(i,j)}
\end{equation}
for all $i\not=j\in [n]$ such that $m(i,j)<\infty$.
The corresponding \emph{Coxeter group} $W_M$ 
is the group generated by elements $s_i$ $(i\in [n])$, subject to 
the relations 
\begin{equation}\label{eq:Coxeter-rel}
(s_is_j)^{m(i,j)}=1\quad (i,j\in [n],\ m(i,j)<\infty).
\end{equation}
The relations \eqref{eq:Coxeter-rel} are equivalent to 
\begin{equation}\label{eq:Coxeter-rel2}
s_i^2=1\ (i\in[n]),\quad
(s_{i}s_{j})_{m(i,j)}=(s_{j}s_{i})_{m(i,j)}\
(i\not=j\in [n], m(i,j)<\infty),
\end{equation}
and hence, there exists a canonical surjection $A_M\to W_M$ defined by
$a_i\mapsto s_i$ $(i\in [n])$,  whose kernel is known as the 
\emph{pure Artin group} associated with $M$.

We say that a Coxeter matrix $M$ is \emph{even} if 
$m(i,j)\in 2\mathbb{N}\cup\{\infty\}$ for all $i\not=j\in [n]$.
The corresponding Artin group $A_M$ and Coxeter group $W_M$
are then called \emph{even}. 
In particular, if $m(i,j)\in\{2,\infty\}$ for all $i\not=j$,
the matrix and the groups are said to be \emph{right-angled}.

For an even Coxeter matrix $M$, the above relations 
\eqref{eq:rel-Artin0} and \eqref{eq:Coxeter-rel2} simplify to
\begin{equation}\label{eq:rel-Artin1}
(a_{i}a_{j})^{n(i,j)}=(a_{j}a_{i})^{n(i,j)}\quad
(i\not=j\in [n], n(i,j)<\infty)
\end{equation}
and
\begin{equation}\label{eq:rel-Coxeter1}
s_i^2=1\quad (i\in[n]),\quad
(s_{i}s_{j})^{n(i,j)}=(s_{j}s_{i})^{n(i,j)}\quad
(i\not=j\in [n], n(i,j)<\infty)
\end{equation}
respectively, where we set $n(i,j)\coloneqq m(i,j)/2$ if $m(i,j)<\infty$, and 
define $n(i,j)\coloneqq\infty$ if $m(i,j)=\infty$.

\section{Hopf's Formula}\label{sec:Hopf}

Let $G = F / R$ be a group, where $F$ is a free group and $R$ is a normal subgroup of $F$.
Then Hopf's formula \cite{MR6510} (see also \cite{MR672956}) gives:
\[
H_2(G) \cong \frac{R \cap [F, F]}{[F, R]}.
\]
\begin{lem}\label{lem:Hopf-representative}
Let $G=F/R$ be a group, where $F$ is a free group and $R$ is the normal closure of
finitely many elements $r_1, r_2, \dots, r_m \in F$.
Then any second homology class in $H_2(G)$ is represented by an element in
$R\cap [F,F]$ of the form
\[
\prod_{i=1}^{m} r_i^{\ell(i)} \in R\cap [F, F]
\]
for some integers $\ell(i) \in \mathbb{Z}$.
\end{lem}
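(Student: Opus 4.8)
The plan is to carry out the entire argument inside the abelian group $R/[F,R]$, which by Hopf's formula contains $H_2(G)=(R\cap[F,F])/[F,R]$ as a subgroup. First I would record the relevant containments. Since $R$ is normal in $F$, every commutator $[f,r]=frf^{-1}r^{-1}$ with $f\in F$ and $r\in R$ lies in $R$, so $[F,R]\subseteq R$ and the quotient $R/[F,R]$ makes sense. Because $R\subseteq F$ we have $[R,R]\subseteq[F,R]$, so $R/[F,R]$ is abelian; and $[F,R]\subseteq[F,F]$, so $(R\cap[F,F])/[F,R]$ is genuinely a subgroup of $R/[F,R]$.

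The key step is to exhibit $r_1,\dots,r_m$ as generators of the abelian group $R/[F,R]$. As $R$ is the normal closure of $r_1,\dots,r_m$, it is generated as a group by the conjugates $wr_iw^{-1}$ ($w\in F$, $i\in[m]$) together with their inverses. For each such conjugate the identity $wr_iw^{-1}=[w,r_i]\,r_i$ with $[w,r_i]\in[F,R]$ gives $wr_iw^{-1}\equiv r_i\pmod{[F,R]}$. Hence the images of $r_1,\dots,r_m$ generate $R/[F,R]$ as an abelian group, and every element of $R/[F,R]$ can be written as $\prod_{i=1}^m r_i^{\ell(i)}\bmod[F,R]$ for suitable integers $\ell(i)\in\mathbb{Z}$.

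Finally I would apply this to an arbitrary class in $H_2(G)$. Choosing a representative $x\in R\cap[F,F]$, the previous step yields integers $\ell(i)$ with $x\equiv\prod_{i=1}^m r_i^{\ell(i)}\pmod{[F,R]}$. The product lies in $R$ by construction, and since it differs from $x\in[F,F]$ by an element of $[F,R]\subseteq[F,F]$, it also lies in $[F,F]$; thus $\prod_{i=1}^m r_i^{\ell(i)}\in R\cap[F,F]$ and represents the given class. I expect the only point requiring care to be exactly this last verification---that the candidate representative lands in $R\cap[F,F]$ rather than merely in $R$---which is nonetheless immediate from $[F,R]\subseteq[F,F]$. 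The conceptual heart is simply the observation that passing to $R/[F,R]$ abelianizes $R$ while collapsing each relator's conjugacy class onto the relator itself.
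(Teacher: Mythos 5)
Your proposal is correct and follows essentially the same route as the paper: pass to the abelian quotient $R/[F,R]$, use $wr_iw^{-1}\equiv r_i \bmod [F,R]$ to see that the normal generators $r_i$ generate it, and invoke Hopf's formula to get a representative of the stated form. Your final verification that the product lies in $[F,F]$ (via $[F,R]\subseteq[F,F]$) is a detail the paper leaves implicit, but it is the same argument, just spelled out more fully.
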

\begin{proof}
The quotient $R / [F, R]$ is an abelian group since $[R,R]\subset [F,R]$.
Let $f \in F$ and $r \in R$.
Then, since $[f,r]=frf^{-1}r^{-1}\in [F,R]$, we have
\[
f r f^{-1} \equiv r \mod [F, R].
\]
This implies that conjugation by elements of \( F \) acts trivially modulo \( [F, R] \),  
and hence any element of \( R / [F, R] \) is represented by a product of powers of the 
normal generators \( r_i \):
\[
\prod_{i=1}^{m} r_i^{\ell(i)} \in R.
\]
Together with Hopf's formula,
we conclude that any element of \( H_2(G) \) has a representative of the desired form in \( R \cap [F, F] \).
\end{proof}

\section{The first and second homology of even Artin groups}
\label{sec:H2-of-Artin}
In this section, we determine the second integral homology of all even Artin groups, as stated in Theorem~\ref{thm:main}.
Let $M$ be an even Coxeter matrix on $[n]$, and let $A_{M}$ be the associated even
Artin group.
The group $A_{M}$ is generated by elements $a_{i}$ $(i\in [n])$, subject to the relations 
\[
(a_{i}a_{j})^{n(i,j)}=(a_{j}a_{i})^{n(i,j)}\quad
(i\not=j\in [n], n(i,j)<\infty).
\]
From the presentation of $A_M$, it is easy to prove the following:
\begin{pro}\label{pro:1st-homology-basis}
The first homology group $H_{1}(A_{M})\cong A_{M}/[A_{M},A_{M}]$ is 
a free abelian group with a basis
$\{a_{i}\bmod [A_{M}, A_{M}] \mid i \in [n]\}$.
\end{pro}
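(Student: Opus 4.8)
The plan is to compute the abelianization directly from the presentation, using the fact that abelianization is right exact: writing $A_{M}=F/R$, where $F$ is the free group on $a_{1},\dots,a_{n}$ and $R$ is the normal closure of the defining relators of \eqref{eq:rel-Artin1}, we have that $H_{1}(A_{M})\cong A_{M}/[A_{M},A_{M}]$ is isomorphic to the free abelian group $\Z^{n}$ on $a_{1},\dots,a_{n}$ modulo the subgroup generated by the images of the relators under the canonical map $F\to\Z^{n}$. Thus the entire computation reduces to determining these images, and under this map a word is sent to its vector of exponent sums.

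First I would record the defining relators explicitly. For each pair $i\neq j$ in $[n]$ with $n(i,j)<\infty$, set
\[
r_{ij}:=(a_{i}a_{j})^{n(i,j)}\bigl((a_{j}a_{i})^{n(i,j)}\bigr)^{-1}\in F,
\]
so that $R$ is the normal closure of the $r_{ij}$. It then suffices to compute the exponent sum of each generator in each $r_{ij}$.

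Next I would carry out this bookkeeping. In the word $(a_{i}a_{j})^{n(i,j)}$ each of $a_{i}$ and $a_{j}$ occurs exactly $n(i,j)$ times, and the same holds for $(a_{j}a_{i})^{n(i,j)}$, while no other generator $a_{k}$ $(k\neq i,j)$ occurs at all. Hence the exponent sum of every generator in $r_{ij}$ is $n(i,j)-n(i,j)=0$, so each relator maps to $0$ in $\Z^{n}$.

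Therefore the subgroup of $\Z^{n}$ generated by the images of the relators is trivial, and $H_{1}(A_{M})\cong\Z^{n}$, free abelian with the asserted basis $\{a_{i}\bmod[A_{M},A_{M}]\mid i\in[n]\}$. There is essentially no obstacle here: the only point requiring care is the exponent count, which is immediate once one observes that the even relators are \emph{balanced}---each side of \eqref{eq:rel-Artin1} contains equally many occurrences of $a_{i}$ and of $a_{j}$---so abelianizing kills every relation. This balancing is precisely the feature that fails for general, non-even Artin groups, and it is what makes the first homology free of rank $n$.
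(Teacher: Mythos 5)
Your proof is correct and is exactly the standard argument the paper has in mind: the paper states the proposition without proof, remarking only that it is easy from the presentation, and your exponent-sum computation (each even relator $(a_ia_j)^{n(i,j)}(a_ja_i)^{-n(i,j)}$ is balanced, so it dies in the abelianization) is the intended verification. Your closing observation that this balancing fails for odd $m(i,j)$ correctly identifies why evenness matters here.
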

\noindent
Define 
\[
\mathcal{B}\coloneqq
\{(i,j)\mid 1\leq i<j\leq n,n(i,j)<\infty\}.
\]
We write $A_{M}=F/R$, 
where $F$ is the free group on $\{a_{i}\mid i\in [n]\}$, and $R$ is the
normal closure of the set
\[
\{(a_{i}a_{j})^{n(i,j)}(a_{j}a_{i})^{-n(i,j)}\mid (i,j)\in\mathcal{B}\}\subset F.
\]
Since each relator $(a_{i}a_{j})^{n(i,j)}(a_{j}a_{i})^{-n(i,j)}$ lies in $[F,F]$,
it follows that $R\subset [F,F]$.
Therefore, by Hopf's formula, we have
\[H_{2}(A_{M})\cong R/[F,R].\]
Now we present and prove the main result of this paper:
\begin{thm} \label{thm:main}
The second integral homology group $H_{2}(A_{M})\cong R/[F,R]$ 
is a free abelian group
whose basis is given by
\[\{[a_{i},a_{j}]^{n(i,j)}\bmod [F,R]\mid (i,j)\in\mathcal{B}\}.\]
\end{thm}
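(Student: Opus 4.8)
The plan is to combine the generation statement coming from Hopf's formula with a single \emph{detecting} homomorphism that exhibits the relator classes as an honest basis. By Hopf's formula we already have $H_2(A_M)\cong R/[F,R]$, and Lemma~\ref{lem:Hopf-representative} tells us that this group is generated by the classes of the defining relators
\[
r_{ij}\coloneqq (a_ia_j)^{n(i,j)}(a_ja_i)^{-n(i,j)}\qquad((i,j)\in\mathcal{B}),
\]
which are precisely the elements written $[a_i,a_j]^{n(i,j)}$ in the statement. Thus the assignment $e_{ij}\mapsto r_{ij}\bmod[F,R]$ defines a surjection $\Z^{\mathcal{B}}\to H_2(A_M)$, and the entire content of the theorem is that this map is injective, i.e.\ that the classes $r_{ij}\bmod[F,R]$ are $\Z$-linearly independent.

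To prove independence I would use the map induced on second homology by the abelianization $\alpha\colon A_M\to H_1(A_M)\cong\Z^n$ of Proposition~\ref{pro:1st-homology-basis}. Writing $\Z^n=F/[F,F]$ and applying Hopf's formula to it gives $H_2(\Z^n)\cong [F,F]/[F,[F,F]]\cong\wedge^2\Z^n$, the free abelian group on the symbols $a_i\wedge a_j$ with $i<j$. Since $R\subseteq[F,F]$ and hence $[F,R]\subseteq[F,[F,F]]$, the inclusion $R\hookrightarrow[F,F]$ descends to a well-defined homomorphism
\[
\alpha_*\colon H_2(A_M)\cong R/[F,R]\longrightarrow [F,F]/[F,[F,F]]\cong\wedge^2\Z^n,
\]
which is exactly the map on Hopf quotients induced by $\alpha$.

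The key step is to evaluate $\alpha_*$ on each generator. I would work in the free two-step nilpotent quotient $F/[F,[F,F]]$, in which every commutator $[a_i,a_j]$ is central and $a_ja_i=[a_i,a_j]^{-1}a_ia_j$. Raising the latter to the power $n(i,j)$ and using centrality gives $(a_ja_i)^{n(i,j)}=[a_i,a_j]^{-n(i,j)}(a_ia_j)^{n(i,j)}$, whence
\[
r_{ij}=(a_ia_j)^{n(i,j)}(a_ja_i)^{-n(i,j)}\equiv [a_i,a_j]^{n(i,j)}\pmod{[F,[F,F]]}.
\]
Under the identification $[F,F]/[F,[F,F]]\cong\wedge^2\Z^n$ this reads $\alpha_*\bigl(r_{ij}\bmod[F,R]\bigr)=n(i,j)\,(a_i\wedge a_j)$.

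Finally I would assemble the pieces. Because the pairs in $\mathcal{B}$ are distinct and each $n(i,j)\ge 1$, the elements $n(i,j)\,(a_i\wedge a_j)$ lie in distinct rank-one summands of the free abelian group $\wedge^2\Z^n$ and are therefore $\Z$-linearly independent. Hence the composite $\Z^{\mathcal{B}}\to H_2(A_M)\xrightarrow{\alpha_*}\wedge^2\Z^n$ is injective, which forces the surjection $\Z^{\mathcal{B}}\to H_2(A_M)$ to be an isomorphism; consequently $H_2(A_M)$ is free abelian with the asserted basis. The main obstacle is the middle step: one must verify that $\alpha_*$ is genuinely well defined on the Hopf quotient (the containment $[F,R]\subseteq[F,[F,F]]$) and then carry out the nilpotent-quotient computation cleanly, being careful that this identity holds only modulo $[F,[F,F]]$ and not modulo $[F,R]$. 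Once this detecting map is in hand, everything else is formal.
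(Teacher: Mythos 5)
Your proposal is correct and is essentially the paper's own argument: the paper likewise generates $R/[F,R]$ by the relator classes via Lemma~\ref{lem:Hopf-representative}, and then detects independence through the very same composite $R/[F,R]\hookrightarrow [F,F]/[F,R]\twoheadrightarrow [F,F]/[F,[F,F]]$, invoking Hall's theorem that the target is free abelian on the cosets of the $[a_i,a_j]$ with $i<j$, on which the generators map to the independent elements $n(i,j)\,(a_i\wedge a_j)$. The only mechanical difference is how the congruence $(a_ia_j)^{n(i,j)}(a_ja_i)^{-n(i,j)}\equiv [a_i,a_j]^{n(i,j)}\bmod [F,[F,F]]$ is obtained: you read it off from centrality of commutators in the free two-step nilpotent quotient $F/[F,[F,F]]$, whereas the paper proves by induction the explicit identity $(ab)^n(ba)^{-n}=w_n[a,b]^n$ with $w_n\in[G,[G,G]]$; these are interchangeable.

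One further remark: your closing caution---that this identity holds only modulo $[F,[F,F]]$ and not modulo $[F,R]$---is not merely prudent; it marks the one place where your write-up is more careful than the paper, whose proof asserts the congruence modulo $[F,R]$ (via the garbled justification ``$[R,R]\subset F$, hence $[[R,R],R]\subset[F,R]$'') in order to present the basis literally as $\{[a_i,a_j]^{n(i,j)}\bmod[F,R]\}$. That stronger congruence would force $[a_i,a_j]^{n(i,j)}\in R$, which fails in general: for $n(i,j)=2$ one has the exact identity $(ab)^2(ba)^{-2}=[ab,[a,b]]\,[a,b]^2$ in $F$, while in the dihedral even Artin group $\langle a,b\mid (ab)^2=(ba)^2\rangle\cong\langle a,x\mid [a,x^2]=1\rangle$ (where $x=ab$) the element $[a,b]^2=(xa^{-1}x^{-1}a)^2$ is a reduced alternating word of length $8$ in the amalgam $\Z^2\ast_{\langle x^2\rangle}\langle x\rangle$, hence nontrivial, so $[a,b]^2\notin R$. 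Thus the sustainable form of the theorem is exactly what you prove: $H_2(A_M)$ is free abelian on the relator classes $(a_ia_j)^{n(i,j)}(a_ja_i)^{-n(i,j)}\bmod[F,R]$, which agree with $[a_i,a_j]^{n(i,j)}$ only after projecting to $[F,F]/[F,[F,F]]$---and that projection is all the independence argument requires.
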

\noindent
We begin the proof of the theorem by establishing the following lemma, which is the key to the argument.
\begin{lem} Let $G$ be a group, and let $a,b\in G$.
For every integer $n\geq 1$, 	
there exists an element $w_n\in [G,[G,G]]$ such that 
\[(ab)^n(ba)^{-n}=w_n[a,b]^n.\]
Consequently 
\[(ab)^n(ba)^{-n}\equiv [a,b]^n\mod [G,[G,G]].\]
\end{lem}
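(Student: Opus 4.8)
The plan is to prove the statement by induction on $n$, working modulo the subgroup $[G,[G,G]]$. The base case $n=1$ is trivial, since $(ab)(ba)^{-1}=aba^{-1}b^{-1}=[a,b]$, so we may take $w_1=1$. For the inductive step, I would assume $(ab)^{n-1}(ba)^{-(n-1)}=w_{n-1}[a,b]^{n-1}$ with $w_{n-1}\in[G,[G,G]]$ and aim to show the analogous formula holds for $n$.

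\par The key algebraic device is that the subgroup $[G,[G,G]]$ is normal in $G$ (being characteristic), and more importantly that the quotient $G/[G,[G,G]]$ is a \emph{nilpotent group of class at most $2$}, in which every commutator $[a,b]$ is central. Concretely, modulo $[G,[G,G]]$ the commutator $[a,b]$ commutes with both $a$ and $b$, and hence with everything in the image of $G$. First I would write
\[
(ab)^n(ba)^{-n} = (ab)\,(ab)^{n-1}(ba)^{-(n-1)}\,(ba)^{-1},
\]
substitute the inductive hypothesis to replace the middle factor by $w_{n-1}[a,b]^{n-1}$ modulo $[G,[G,G]]$, and then absorb $w_{n-1}$ into a new element of $[G,[G,G]]$ using normality. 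The remaining task is to simplify $(ab)\,[a,b]^{n-1}\,(ba)^{-1}$ modulo $[G,[G,G]]$.

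\par The heart of the computation is that, working in the class-$2$ nilpotent quotient, the central element $[a,b]^{n-1}$ can be pulled out, leaving $(ab)(ba)^{-1}[a,b]^{n-1}=[a,b]\cdot[a,b]^{n-1}=[a,b]^n$. Making this rigorous amounts to checking that conjugating $[a,b]^{n-1}$ by $ab$ changes it only by an element of $[G,[G,G]]$; this follows because for any $g\in G$ the conjugate $g[a,b]g^{-1}$ differs from $[a,b]$ by the commutator $[g,[a,b]]\in[G,[G,G]]$. Collecting the error terms $w_{n-1}$ and the conjugation discrepancies into a single element $w_n\in[G,[G,G]]$ then yields $(ab)^n(ba)^{-n}=w_n[a,b]^n$, completing the induction.

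\par The main obstacle I anticipate is bookkeeping rather than conceptual: one must be careful that the ``error'' element $w_n$ genuinely lies in $[G,[G,G]]$ and not merely in some larger subgroup, which requires exploiting that both the conjugation discrepancies $[g,[a,b]]$ and the inductive error $w_{n-1}$ are built from commutators of $G$ with commutators of $G$. A clean way to organize this is to pass to the quotient $\bar G\coloneqq G/[G,[G,G]]$ at the outset, prove the identity $\overline{(ab)^n(ba)^{-n}}=\overline{[a,b]}^{\,n}$ there (where all commutators are central), and then lift back: the equality in $\bar G$ says precisely that $(ab)^n(ba)^{-n}[a,b]^{-n}\in[G,[G,G]]$, which is the desired $w_n$. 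This reformulation makes the centrality of commutators available from the start and reduces the proof to a short computation in a class-$2$ nilpotent group.
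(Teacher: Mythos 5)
Your proposal is correct and is essentially the paper's proof: both argue by induction on $n$ via the decomposition $(ab)^n(ba)^{-n}=(ab)\,(ab)^{n-1}(ba)^{-(n-1)}\,(ba)^{-1}$, using that $[a,b]$ is central modulo $[G,[G,G]]$ (the paper implements this directly in $G$ with the identity $gh=[g,h]hg$, constructing $w_{n+1}=[ab,w_n[a,b]^n]\,w_n$ explicitly). Your reformulation in the class-$2$ quotient $G/[G,[G,G]]$, with $w_n$ recovered as the discrepancy $(ab)^n(ba)^{-n}[a,b]^{-n}$, is just a cleaner bookkeeping of the same computation.
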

\begin{proof}
We prove the lemma by induction on $n$.
The case $n=1$ is immediate. %
Assume the assertion holds for some $n\geq 1$, i.e., there exists $w_n\in [G,[G,G]]$
such that 
\[(ab)^n(ba)^{-n}=w_n[a,b]^n.\]
Using the identity
\begin{equation}\label{eq:commutator}
gh=[g,h]hg\quad (g,h\in G),
\end{equation}
we compute:
\begin{align*}
(ab)^{n+1}(ba)^{-(n+1)}&=(ab)(ab)^n(ba)^{-n}(ba)^{-1} \\
&=(ab)(w_n[a,b]^n)(ba)^{-1}\\ %
&=[ab,w_n [a,b]^n](w_n[a,b]^n) (ab)(ba)^{-1}\quad
 (\text{by}\ \eqref{eq:commutator})\\
&=[ab,w_n [a,b]^n] w_n[a,b]^{n+1}.
\end{align*}
Since $w_n\in [G,[G,G]]\subset [G,G]$,
it follows that $w_n [a,b]^n\in [G,G]$, and hence
\[[ab,w_n [a,b]^n]\in [G,[G,G]].\]
Define $w_{n+1}\coloneqq [ab,w_n [a,b]^n] w_n\in [G,[G,G]]$.
This completes the inductive step and proves the lemma.
\end{proof}
\begin{proof}[Proof of Theorem \ref{thm:main}]
Since $[R,R]\subset F$ and hence $[[R,R],R]\subset [F,R]$, we see that
\[
(a_{i}a_{j})^{n(i,j)}(a_{j}a_{i})^{-n(i,j)}
\equiv [a_{i},a_{j}]^{n(i,j)}\mod [F,R].
\]
By Lemma \ref{lem:Hopf-representative}, 
any element of $H_{2}(A_{M})\cong R/[F,R]$ can be represented as
\[
\prod_{(i,j)\in\mathcal{B}}([a_{i},a_{j}]^{n(i,j)})^{\ell(i,j)}\in R
\]
for some integer $\ell(i,j)\in\Z$.
Therefore,
$H_{2}(A_{M})\cong R/[F,R]$ is
generated by 
\[\{[a_{i},a_{j}]^{n(i,j)}\bmod [F,R]\mid (i,j)\in\mathcal{B}\}.\]
Now consider the composition of homomorphisms
\[
\frac{R}{[F,R]}\hookrightarrow\frac{[F,F]}{[F,R]}\twoheadrightarrow
\frac{[F,F]}{[F,[F,F]]}
\]
induced by the inclusion $R\hookrightarrow [F,F]$.
By a well-known result of Hall \cites{MR0038336,MR0103215}, 
the group $[F,F]/[F,[F,F]]$ is a free abelian group
with basis 
\[\{[a_{i},a_{j}]\bmod [F,[F,F]]\mid 1\leq i<j\leq n\}.\]
Thus the image of 
\begin{equation}\label{eq:basis}
\{[a_{i},a_{j}]^{n(i,j)}\bmod [F,R]\mid (i,j)\in\mathcal{B}\}
\end{equation}
in $[F,F]/[F,[F,F]]$ is linearly independent, 
and hence so are their preimages \eqref{eq:basis}.
This proves the theorem.
\end{proof}

\section{Cup products}\label{sec:cup}
In this section, we compute the cup product map
\[
\smile\colon 
H^{1}(A_{M}) \otimes_\Z H^{1}(A_{M}) \to H^{2}(A_{M})
\]
of even Artin groups $A_M$. 
To this end, we use the following result:
Let $G=F/R$ be a group where $F$ is a free group and $R$ is a normal subgroup of $F$, and let 
$h\colon H^2(G)\to\Hom(H_2(G),\Z)$ be the natural homomorphism.
Let $\phi,\psi\in H^1(G)=\Hom(H_1(G),\Z)$ be two cohomology classes,
which we regard as homomorphisms $G\to\Z$.
By Hopf's formula, an element $\xi\in H^2(G)=[F,F]\cap R/[F,R]$
is represented by an element $\prod_i[g_i,h_i]\in [F,F]\cap R$.
The evaluation of $h(\phi\smile\psi)$ on $\xi$ can be
described by the follwing theorem, whch is 
essencially due to Hopf \cite{MR6510}. 
The alternative proof can be found in Gadgil-Kachari \cite{MR2582056}.
\begin{thm}\label{thm:Hopf-cup}
For the class $\xi\in H_2(G)$ corresponding to 
$\prod_i[g_i,h_i]\in [F,F]\cap R$,
\[
h(\phi\smile\psi)(\xi)=\sum_i
(\phi(g_i)\psi(h_i)-\psi(g_i)\phi(h_i)).
\]
\end{thm}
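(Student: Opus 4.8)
The plan is to reduce the statement to the single case of the free abelian group $\Z^2$, where it becomes the classical cup-product computation for the torus, and then transport the answer back by functoriality. Regarding $\phi,\psi$ as homomorphisms $G\to\Z$, I would first assemble them into a single group homomorphism $\mu=(\phi,\psi)\colon G\to\Z^2$; this is legitimate precisely because the target is abelian and each coordinate is itself a homomorphism. Writing $\alpha,\beta\in H^1(\Z^2)$ for the two coordinate projections, naturality of the cup product gives $\phi\smile\psi=\mu^*(\alpha\smile\beta)$, while naturality of the pairing $h$ coming from the universal coefficient theorem gives
\[
h(\phi\smile\psi)(\xi)=h(\alpha\smile\beta)(\mu_*\xi).
\]
Everything is thereby reduced to computing $\mu_*\xi\in H_2(\Z^2)$ and to the base-case evaluation over $\Z^2$.

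Next I would compute $\mu_*\xi$ through Hopf's formula. Present $\Z^2=F_2/R_2$ with $F_2=\langle x,y\rangle$ and $R_2=[F_2,F_2]$ (the normal closure of $[x,y]$ equals the commutator subgroup), so that $H_2(\Z^2)\cong [F_2,F_2]/[F_2,[F_2,F_2]]$ is free abelian of rank one, generated by the class of $[x,y]$. Since $F$ is free, I lift $\mu$ to a homomorphism $\tilde\mu\colon F\to F_2$ with $\tilde\mu(R)\subseteq R_2$; the induced map on $H_2$ is then realized on the commutator representatives of Hopf's formula by $\xi=\prod_i[g_i,h_i]\mapsto\prod_i[\tilde\mu(g_i),\tilde\mu(h_i)]$. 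By construction $\tilde\mu(g_i)$ and $\tilde\mu(h_i)$ abelianize to $x^{\phi(g_i)}y^{\psi(g_i)}$ and $x^{\phi(h_i)}y^{\psi(h_i)}$ in $\Z^2$.

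The heart of the computation is then the same commutator calculus already invoked in the proof of Theorem~\ref{thm:main}: modulo $[F_2,[F_2,F_2]]$ the bracket descends to an alternating bilinear form on the abelianization, so that if $u,v\in F_2$ abelianize to $x^ay^b$ and $x^cy^d$ then
\[
[u,v]\equiv[x,y]^{ad-bc}\pmod{[F_2,[F_2,F_2]]}.
\]
Applying this to each factor with $(a,b)=(\phi(g_i),\psi(g_i))$ and $(c,d)=(\phi(h_i),\psi(h_i))$ identifies $\mu_*\xi$ with the class of $[x,y]^{\,\sum_i(\phi(g_i)\psi(h_i)-\psi(g_i)\phi(h_i))}$, that is, with $\sum_i(\phi(g_i)\psi(h_i)-\psi(g_i)\phi(h_i))$ times the generator of $H_2(\Z^2)$.

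Finally I would invoke the base case $h(\alpha\smile\beta)([x,y])=1$, which is exactly the classical fact that for the torus $B\Z^2$ the product of the two one-dimensional classes evaluates to $1$ on the fundamental class, once one checks that the Hopf representative $[x,y]$ corresponds to that fundamental class. Combining this normalization with the computation of $\mu_*\xi$ yields the asserted formula. I expect the main obstacle to lie in the bookkeeping at two points: verifying that the Hopf isomorphism is natural in $G$, so that $\mu_*$ is genuinely computed by $\tilde\mu$ on commutator words (which requires checking that $\tilde\mu$ carries $[F,F]$, $R$, and $[F,R]$ into their counterparts for $F_2$), and fixing the orientation convention in the base case so that the normalization is $+1$ rather than $-1$. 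The commutator-calculus step, though central, is routine via Hall's basis for $[F_2,F_2]/[F_2,[F_2,F_2]]$.
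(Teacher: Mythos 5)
Your proof is correct, but note that the paper does not actually prove Theorem~\ref{thm:Hopf-cup}: it quotes the statement as essentially due to Hopf and refers to Gadgil--Kachari \cite{MR2582056} for an alternative proof, so any comparison is with those sources rather than with an in-text argument. Gadgil--Kachari proceed in the opposite direction to you: they \emph{pull back} the class $\xi$ from a genus-$g$ surface group (the word $\prod_i[g_i,h_i]$ defines a homomorphism $\pi_1(\Sigma_g)\to G$ carrying the fundamental class to $\xi$) and reduce to the intersection form on the surface, whereas you \emph{push forward} along $\mu=(\phi,\psi)\colon G\to\Z\times\Z$ and reduce to the torus; your direction is available precisely because both cohomology classes are integral and can be packaged into one homomorphism, and it has the advantage of staying entirely within the commutator-calculus toolkit the paper already deploys. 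The two bookkeeping points you flag are genuinely fillable with ingredients the paper itself uses: naturality of Hopf's formula under a lift $\tilde\mu$ with $\tilde\mu(R)\subseteq R_2$ (hence $\tilde\mu([F,R])\subseteq[F_2,R_2]$ and $\tilde\mu([F,F])\subseteq[F_2,F_2]$) is exactly Brown's Exercise~II.6.3(b), invoked in Section~\ref{sec:Coxeter} for $\hat\rho_*$; and your class-two bilinearity $[u,v]\equiv[x,y]^{ad-bc} \bmod [F_2,[F_2,F_2]]$ is immediate since $[F_2,F_2]$ is central in the class-two quotient, with Hall's basis guaranteeing the class of $[x,y]$ generates $H_2(\Z\times\Z)\cong\Z$. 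One caution: the normalization $h(\alpha\smile\beta)\bigl([x,y]\bmod[F_2,R_2]\bigr)=+1$ is a fact to be verified, not an orientation convention you are free to fix, because the Hopf isomorphism is canonical; but it does check out, since the explicit isomorphism of Brown II.5 Exercise~4(c) quoted in the proof of Proposition~\ref{pro:Pont-Hopf} sends the class of $[x,y]$ to the bar cycle $[x|y]-[y|x]$, on which the bar-cochain cup product $(\alpha\smile\beta)([g|h])=\alpha(g)\beta(h)$ evaluates to $\alpha(x)\beta(y)-\alpha(y)\beta(x)=1$. (That same bar-resolution formula, applied directly to the representative of $\xi$ in the proof of Proposition~\ref{pro:Pont-Hopf} and using $\phi(I_i)=\psi(I_i)=0$ for products of commutators, in fact yields the theorem in one step without any reduction --- a still shorter route worth knowing.)
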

Now let $M$ be an even Coxeter matrix on the index set $[n]$, 
and let $A_{M}=F/R$ be the associated even Artin group,
where $F$ is the free group and $R$ is the normal subgroup
as in the previous section.
Since $H_1(A_M)$ and $H_2(A_M)$ are free abelian,
the universal coefficient theorem implies that
$H^1(A_M)$ and $H^2(A_M)$ are also  free abelian.
Define
\begin{align*}
&\alpha_i\coloneqq a_i\bmod [A_M,A_M]\in H_1(A_M)\quad (i\in [n]),\\
&\alpha_{ij}\coloneqq [a_i,a_j]^{n(i,j)}\bmod [F,R]\in H_2(A_M)
\quad ((i,j)\in\mathcal{B}),
\end{align*}
so that  $\{\alpha_i\mid i\in [n]\}$ is a basis of $H_1(A_M)$ by Proposition \ref{pro:1st-homology-basis}, 
and
$\{\alpha_{ij}\mid (i,j)\in\mathcal{B}\}$ is a basis of $H_2(A_M)$ by Theorem \ref{thm:main}.
Let
\[\{\beta_i\mid i\in [n]\} \subset H^1(A_M)\quad \text{and}\quad
\{\beta_{ij}\mid (i,j)\in\mathcal{B}\}\subset H^2(A_M)\]
denote the dual bases %
 corresponding to $\{\alpha_i\}$ and $\{\alpha_{ij}\}$, respectively.
By applying Theorem \ref{thm:Hopf-cup}, we prove the following.
\begin{thm}\label{thm:cup-prod}
Under these notations, $\beta_i\smile\beta_i=0$ for all $i$, and,
for $i<j$, we have:
\[\beta_i\smile\beta_j=\begin{cases}
n(i,j)\beta_{ij} & (n(i,j)<\infty), \\ 0 & (n(i,j)=\infty).\end{cases}
\]
\end{thm}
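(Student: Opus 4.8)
The plan is to exploit that both $H_1(A_M)$ and $H_2(A_M)$ are free abelian, so that the universal coefficient theorem makes the natural map $h\colon H^2(A_M)\to\Hom(H_2(A_M),\Z)$ an isomorphism. It then suffices to compute the integer $h(\beta_i\smile\beta_j)(\alpha_{kl})$ on every basis element $\alpha_{kl}$ of $H_2(A_M)$ and to read off the cup product from these values, using that $\beta_{kl}$ is dual to $\alpha_{kl}$, i.e. $h(\beta_{kl})(\alpha_{k'l'})=\delta_{(k,l),(k',l')}$.

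First I would record an explicit representative of $\alpha_{kl}$ in the form demanded by Theorem~\ref{thm:Hopf-cup}. By definition $\alpha_{kl}=[a_k,a_l]^{n(k,l)}\bmod[F,R]$, and the computation in the proof of Theorem~\ref{thm:main} shows that $[a_k,a_l]^{n(k,l)}$ already lies in $R\cap[F,F]$. Writing it as a product of commutators
\[
[a_k,a_l]^{n(k,l)}=\prod_{m=1}^{n(k,l)}[a_k,a_l],
\]
I can apply Theorem~\ref{thm:Hopf-cup} with $g_m=a_k$ and $h_m=a_l$ for each $m$. Since $\beta_i$, regarded as a homomorphism through $H_1(A_M)$, satisfies $\beta_i(a_k)=\delta_{ik}$, the theorem gives
\[
h(\beta_i\smile\beta_j)(\alpha_{kl})
=\sum_{m=1}^{n(k,l)}\bigl(\beta_i(a_k)\beta_j(a_l)-\beta_j(a_k)\beta_i(a_l)\bigr)
=n(k,l)\bigl(\delta_{ik}\delta_{jl}-\delta_{jk}\delta_{il}\bigr).
\]

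Next I would analyze this expression case by case. Setting $i=j$ makes every summand vanish, so $h(\beta_i\smile\beta_i)(\alpha_{kl})=0$ for all $(k,l)$, giving $\beta_i\smile\beta_i=0$. For $i<j$, the ordering conventions $i<j$ and $k<l$ force $\delta_{jk}\delta_{il}=0$, while $\delta_{ik}\delta_{jl}$ is nonzero exactly when $(k,l)=(i,j)$; hence the evaluation equals $n(i,j)$ if $(k,l)=(i,j)$ and $0$ otherwise. When $n(i,j)<\infty$ the pair $(i,j)$ lies in $\mathcal{B}$, and comparison with the dual basis yields $h(\beta_i\smile\beta_j)=n(i,j)\,h(\beta_{ij})$, so $\beta_i\smile\beta_j=n(i,j)\beta_{ij}$ by injectivity of $h$. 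When $n(i,j)=\infty$ the pair $(i,j)$ is absent from $\mathcal{B}$, the evaluation vanishes on every basis element, and therefore $\beta_i\smile\beta_j=0$.

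The calculation is routine once the representative is in hand, and there is no serious analytic obstacle; the two points requiring care are the bookkeeping of the Kronecker deltas under the orderings $i<j$ and $k<l$, which is precisely what kills the $\delta_{jk}\delta_{il}$ term and isolates the diagonal contribution, and the use of $h$ as an isomorphism, so that matching $h(\beta_i\smile\beta_j)$ with $n(i,j)\,h(\beta_{ij})$ in $\Hom(H_2(A_M),\Z)$ suffices to identify the cohomology classes themselves. The genuinely essential input is the explicit basis of $H_2(A_M)$ from Theorem~\ref{thm:main}, without which the Hopf-type formula could not be applied directly.
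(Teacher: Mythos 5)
Your proposal is correct and follows essentially the same route as the paper: both evaluate $h(\beta_i\smile\beta_j)$ on the Hopf-formula representatives $[a_k,a_l]^{n(k,l)}$ of the basis of $H_2(A_M)$ from Theorem~\ref{thm:main} via Theorem~\ref{thm:Hopf-cup}, and then read off the cup product by duality and injectivity of $h$. The only cosmetic differences are that the paper dismisses $\beta_i\smile\beta_i=0$ by degree reasons (torsion-freeness of $H^2$) rather than by your direct evaluation, and your writing of the coefficient as $n(k,l)$ before the Kronecker deltas collapse it to $n(i,j)$ is in fact slightly more careful than the paper's display.
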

\begin{proof}
Since $H^2(A_M)$ is a free abelian group,
it is obvious by degree reasons that $\beta_i\smile\beta_i=0$. 
Let $h\colon H^2(A_M)\to\Hom(H_2(A_M),\Z)$ be the natural 
isomorphism.
To prove the theorem,
it is enough to evaluate $h(\beta_i\smile\beta_j)(\alpha_{pq})$
for $i<j$ and $(p,q)\in\mathcal{B}$.
According to Theorem \ref{thm:Hopf-cup}, we have
\begin{align*}
h(\beta_i\smile\beta_j)(\alpha_{pq})
&=n(i,j)\{\beta_i(a_p)\beta_j(a_q)-\beta_j(a_p)\beta_i(a_q)\}\\
&=n(i,j)(\delta_{ip}\delta_{jq}-\delta_{jp}\delta_{iq})
\end{align*}
beccause $\beta_i(a_j)=\beta_i(\alpha_j)=\delta_{ij}$ (Kronecker's delta)
by the definition of $\beta_i$.
Since $i<j$ and $p<q$, the case $(i,j)=(q,p)$ cannot occur.
We conclude that
\[
h(\beta_i\smile\beta_j)(\alpha_{pq})
=\begin{cases} n(i,j) & (i,j)=(p,q), \\ 0 &(\text{otherwise}),
\end{cases}
\]
from which the theorem follows immediately.
\end{proof}
\begin{rem}
It follows from Theorem \ref{thm:cup-prod} that the cup product map
\[\smile\colon H^1(A_M;\Q)\otimes_\Q H^1(A_M;\Q)\to H^2(A_M;\Q)\]
is surjective over the rationals.
\end{rem}
\begin{rem}
The integral cohomology rings of right-angled Artin groups were
determined in  Charney-Davis \cite{MR1368655}.
\end{rem}

\section{Pontryagin products}\label{sec:Pontryagin}
We first recall relevant facts concerning
Pontryagin products on abelian groups.
See \cite{MR672956} for details.
Let $G$ be an abelian group.
Then the multiplication map $\mu\colon G\times G\to G$ defined by
$(g,h)\mapsto gh$ is a group homomorphism.
The \emph{Pontryagin product} on $H_*(G)$ is
defined as the composite
\[
H_*(G)\otimes_\Z H_*(G)\xrightarrow{\times}
H_*(G\times G)\xrightarrow{\mu_*} H_*(G),
\]
where $\times$ is the cross product.
Then $H_*(G)$ is a strictly anti-commutative ring with
respect to Pontryagin products.
Identifying $H_1(G)$ with $G$, we obtain a ring homomorphism
\[
\psi\colon\wedge^\ast_\Z G \to H_*(G),
\]
induced by Pontryagin products.
The homomorphism $\psi$ is injective,
and it is an isomorphism in degrees $0$, $1$, and $2$.
We now focus on the degree 2 part.
Since $\psi\colon\wedge^2_\Z G\to H_2(G)$ is an isomorphism,
we may denote $g\wedge h\in H_2(G)$ instead of $\psi(g\wedge h)\in H_2(G)$
for simplicity.
In terms of the normalized bar resolution, we have
\begin{equation}\label{eq:Pont-bar}
g\wedge h=[g|h]-[h|g]\in H_2(G)\quad (g,h\in G)
\end{equation}
(see \cite{MR672956}*{\S V.6, Exercise 4}).
If $G=\Z\times\Z$, then $(1,0)\wedge (0,1)\in H_2(\Z\times\Z)\cong\Z$
is one of two generators.
Indeed, it represents the fundamental class of the torus
$T^{2}=K(\Z\times\Z,1)$, whose orientation is suitably chosen.

Now let $G$ be a group and $g,h\in G$ be commuting elements.
The Pontryagin product $\langle g,h\rangle\in H_2(G)$ was introduced in
Edmonds-Livingston \cite{MR0769296}.
Our definition of the Pontryagin product is equivalent to, but
slightly different from theirs.
Since $[g,h]=1$, there is a homomorphism 
$\phi\colon\Z\times\Z\to G$ given by
$\phi(1,0)=g$ and $\phi(0,1)=h$.
Then the \emph{Pontryagin product} of $g,h\in G$ is defined by
\begin{equation}\label{eq:Pont-nonabel}
\langle g,h\rangle\coloneqq \phi_*((1,0)\wedge (0,1))\in H_2(G).
\end{equation}
It follows from \eqref{eq:Pont-bar} that
\begin{equation}\label{eq:Pont-bar-nonabel} 
\langle g,h\rangle=[g|h]-[h|g]\in H_2(G).
\end{equation}
The following proposition can be derived  from
\eqref{eq:Pont-bar-nonabel}: 
\begin{pro}[Edmonds-Livingston \cite{MR0769296}]\label{pro:properties-Pont}
For a group $G$, one has
\begin{enumerate}
\item $\langle g,g\rangle=\langle g,1\rangle=\langle 1,g\rangle=0$,
\item $\langle g,h\rangle=-\langle h,g\rangle$ if $[g,h]=1$,
\item $\langle g,hk\rangle=\langle g,h\rangle+\langle g,k\rangle$
if $[g,h]=[g,k]=1$, \label{eq:boundary}
\item $\langle kgk^{-1},khk^{-1}\rangle=\langle g,h\rangle$ for all $k\in G$
 if $[g,h]=1$. 
\end{enumerate}
\end{pro}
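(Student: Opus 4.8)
The plan is to derive all four properties from the bar-resolution formula \eqref{eq:Pont-bar-nonabel}, namely $\langle g,h\rangle=[g|h]-[h|g]\in H_2(G)$, which reduces everything to elementary manipulations with $2$-cycles in the normalized bar complex. Recall that in the normalized bar resolution, the boundary of a $3$-chain $[g|h|k]$ is
\[
\partial[g|h|k]=[h|k]-[gh|k]+[g|hk]-[g|h],
\]
and that normalized bars with a trivial entry, such as $[1|g]$ or $[g|1]$, vanish. Every identity below will be obtained by exhibiting the relevant difference of $2$-cycles as such a boundary, so that it becomes zero in homology.

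First I would dispatch property (1). For $\langle g,g\rangle$ the formula gives $[g|g]-[g|g]=0$ on the nose, and for $\langle g,1\rangle$ and $\langle 1,g\rangle$ the normalization already forces $[g|1]=[1|g]=0$, so each of these is $0$ as a chain. Property (2) is equally direct: since the formula $[g,h]=1$ is symmetric in the defining condition, $\langle h,g\rangle=[h|g]-[g|h]=-([g|h]-[h|g])=-\langle g,h\rangle$, which holds at the chain level and hence in homology.

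Property (3) is the essential computation and the one I expect to be the main obstacle, because it is the only identity that genuinely needs a boundary rather than a chain-level equality. Assuming $[g,h]=[g,k]=1$ (so that $g$ commutes with both $h$ and $k$, and in particular $hk=kh$ need not hold but all the relevant products are well-defined cycles), I would compute the boundary $\partial[g|h|k]$ and a companion $\partial[h|g|k]$, and combine them to produce $\langle g,hk\rangle-\langle g,h\rangle-\langle g,k\rangle$ as an explicit sum of such boundaries. Concretely, the chain $\langle g,hk\rangle-\langle g,h\rangle-\langle g,k\rangle$ expands to
\[
[g|hk]-[hk|g]-[g|h]+[h|g]-[g|k]+[k|g],
\]
and the commutativity hypotheses let me rewrite entries such as $[hk|g]$ using $gh=hg$ and $gk=kg$; the task is then to recognize this combination as $\partial$ of a suitable integral combination of normalized $3$-bars. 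I would look for the right $3$-chains by matching the six $2$-bars above against the four-term boundary formula, using that $[gh|k]=[hg|k]$ and similar rewritings are permitted. This bookkeeping is routine but is the step where care is required to get every sign and every normalization right.

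Finally, property (4) follows from naturality of the construction under inner automorphisms. Conjugation by $k$ induces an automorphism $\operatorname{Int}_k\colon G\to G$, which acts on $H_*(G)$; but a standard fact is that inner automorphisms act trivially on group homology. Thus I would argue that $\langle kgk^{-1},khk^{-1}\rangle=(\operatorname{Int}_k)_*\langle g,h\rangle=\langle g,h\rangle$, once I note that the defining homomorphism $\phi\colon\Z\times\Z\to G$ for the pair $(kgk^{-1},khk^{-1})$ is precisely $\operatorname{Int}_k\circ\phi$ for the original pair. Alternatively, one can prove (4) directly at the chain level by writing $\langle kgk^{-1},khk^{-1}\rangle-\langle g,h\rangle$ as a boundary, in the same spirit as property (3); I would present the naturality argument as the clean route and relegate the explicit boundary computation to a remark if needed.
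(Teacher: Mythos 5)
Your approach coincides with the paper's: the paper proves only item (3) (dismissing (1), (2), and (4) as straightforward, just as you handle them at the chain level and via triviality of inner automorphisms on homology), and it does so by exhibiting a normalized $3$-chain, namely $c=[g|h|k]-[h|g|k]+[h|k|g]$, whose boundary is $\langle g,hk\rangle-\langle g,h\rangle-\langle g,k\rangle$ --- exactly the bookkeeping you defer. For completeness, note that the two chains you name, $[g|h|k]$ and $[h|g|k]$, do not suffice on their own: after the cancellation $[gh|k]=[hg|k]$ their combined boundary leaves $[h|k]-[h|kg]$ where the cycle requires $[k|g]-[hk|g]$, and it is the third term $[h|k|g]$, with $\partial[h|k|g]=[k|g]-[hk|g]+[h|kg]-[h|k]$, that supplies precisely this correction, so the matching procedure you describe does terminate successfully.
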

\begin{proof}
We give a proof of \eqref{eq:boundary}, since 
it was omitted in \cite{MR0769296} and the remaining cases are straightforward.
Let 
\[c\coloneqq [g|h|k]-[h|g|k]+[h|k|g]\in C_3(G)\]
be the normalized $3$-chain of $G$.
A direct computation shows that
\[
\partial c=\langle g,hk\rangle-\langle g,h\rangle-\langle g,k\rangle,
\]
which proves \eqref{eq:boundary}.
\end{proof}

Pontryagin products on the symmetric groups $\mathfrak{S}_n$ $(n \geq 4)$ 
and the alternating groups $A_n$ $(n=6,7,8)$ were conisidered in Edmonds-Livingston~\cite{MR0769296}.  
Pontryagin products on non-abelian groups have been applied to knot and link theory \cites{MR0769296,MR0942427,MR1775387,MR2341312}.  
In this section, we compute Pontryagin products on even Artin groups.  
The following proposition plays a crucial role in our approach.

\begin{pro}\label{pro:Pont-Hopf}
Let $G=F/R$ where $F$ is a free group and $R$ is a normal subgroup of $F$,
and $s\colon G\to F$ be
a set-theoretic section. For commuting elements 
$g,h\in G$, the Pontryagin product $\langle g,h\rangle\in H_{2}(G)$
is represented by
\[
[s(g),s(h)]\bmod [F,R]\in R\cap [F,F]/[F,R].
\]
\end{pro}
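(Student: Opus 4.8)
The plan is to first verify that the proposed representative is well-defined, and then to identify it with $\langle g,h\rangle$ by reducing everything to a single computation for $\Z\times\Z$ via naturality of Hopf's isomorphism. For well-definedness, I would note that since $g$ and $h$ commute, the commutator $[s(g),s(h)]$ maps to $[g,h]=1$ under the projection $\pi\colon F\map G$, so $[s(g),s(h)]\in R$; being a commutator it also lies in $[F,F]$, whence $[s(g),s(h)]\in R\cap[F,F]$ and its class modulo $[F,R]$ is a genuine element of $H_2(G)$ by Hopf's formula.

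For the reduction, I would present $\Z\times\Z$ as $F_2/R_0$, where $F_2$ is the free group on $\{x,y\}$ and $R_0$ is the normal closure of $[x,y]$, and define a lift $\tilde\phi\colon F_2\map F$ of $\phi$ by $\tilde\phi(x)\coloneqq s(g)$ and $\tilde\phi(y)\coloneqq s(h)$. Since $\pi(s(g))=g=\phi(1,0)$ and $\pi(s(h))=h=\phi(0,1)$, the map $\tilde\phi$ covers $\phi$; moreover $\tilde\phi([x,y])=[s(g),s(h)]\in R$, so $\tilde\phi(R_0)\subset R$ because $R$ is normal. Thus $\tilde\phi$ is a morphism of presentations lying over $\phi$.

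The key input is the naturality of Hopf's isomorphism: for a morphism of presentations $\tilde\theta\colon F'\map F''$ with $\tilde\theta(R')\subset R''$ covering $\theta\colon G'\map G''$, the map $R'\cap[F',F']/[F',R']\map R''\cap[F'',F'']/[F'',R'']$ sending $w\mapsto\tilde\theta(w)\bmod[F'',R'']$ is well-defined and corresponds to $\theta_*\colon H_2(G')\map H_2(G'')$ (this is standard; see \cite{MR672956}). Granting this for $\tilde\phi$, it remains to settle the base case: the class $(1,0)\wedge(0,1)\in H_2(\Z\times\Z)$ is represented by $[x,y]\bmod[F_2,R_0]$. Both classes are generators of $H_2(\Z\times\Z)\cong\Z$ — the latter because conjugation acts trivially modulo $[F_2,R_0]$ (exactly as in the proof of Lemma~\ref{lem:Hopf-representative}), so $R_0/[F_2,R_0]$ is cyclic, generated by $[x,y]$. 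Hence they agree up to sign, and I would fix the sign to be $+$ by the orientation convention of the excerpt; concretely it can be checked once at the chain level using $(1,0)\wedge(0,1)=[(1,0)|(0,1)]-[(0,1)|(1,0)]$ from \eqref{eq:Pont-bar} together with the explicit comparison of the bar resolution with the resolution attached to the presentation.

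Assembling the pieces, naturality applied to $\tilde\phi$ and the base case yield
\[
\langle g,h\rangle=\phi_*((1,0)\wedge(0,1))=\phi_*([x,y]\bmod[F_2,R_0])=[s(g),s(h)]\bmod[F,R],
\]
where the first equality is the definition \eqref{eq:Pont-nonabel}. I expect the main obstacle to be the sign bookkeeping in the base case: although both classes are manifestly generators of $\Z$, matching $(1,0)\wedge(0,1)$ with $[x,y]$ rather than with $[y,x]=[x,y]^{-1}$ requires care with the orientation convention, and is the only place where an honest chain-level comparison between Hopf's formula and the bar resolution is needed.
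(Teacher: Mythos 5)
Your proposal is correct, but it takes a genuinely different route from the paper. You reduce to the universal example: present $\Z\times\Z$ as $F_2/R_0$, lift $\phi$ to a morphism of presentations $\tilde\phi$ with $\tilde\phi(x)=s(g)$, $\tilde\phi(y)=s(h)$, invoke naturality of Hopf's isomorphism, and then settle the single base case $(1,0)\wedge(0,1)=[x,y]\bmod [F_2,R_0]$. The paper instead applies, directly to $[s(g),s(h)]$, the explicit chain-level formula of \cite{MR672956}*{\S II.5, Exercise 4(c)} for the Hopf isomorphism $R\cap[F,F]/[F,R]\to H_2(G)$: for $r=\prod_i[\alpha_i,\beta_i]\in R$ the class is represented by $\sum_i\{[I_{i-1}|a_i]+[I_{i-1}a_i|b_i]-[I_{i-1}a_ib_ia_i^{-1}|a_i]-[I_i|b_i]\}$, which for a single commutator of commuting elements collapses in the normalized bar complex to $[g|h]-[h|g]=\langle g,h\rangle$ by \eqref{eq:Pont-bar-nonabel}. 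Your approach is conceptually attractive because it mirrors the definition \eqref{eq:Pont-nonabel} of $\langle g,h\rangle$ as a pushforward from $\Z\times\Z$, and the naturality input is legitimate --- it is the same fact (\cite{MR672956}*{Exercise II.6.3(b)}) the paper itself uses later for the map $\hat\rho_*$ in Section~\ref{sec:Coxeter}; your generator argument via Lemma~\ref{lem:Hopf-representative} is also sound, since $R_0\subset[F_2,F_2]$ makes $R_0/[F_2,R_0]$ all of $H_2(\Z\times\Z)$. However, note what the detour actually buys: as you yourself flag, pinning down the sign in the base case (matching $(1,0)\wedge(0,1)$ with $[x,y]$ rather than $[y,x]$) cannot be done by rank considerations and requires exactly the chain-level comparison between the bar resolution and the presentation --- that is, the same formula from \cite{MR672956}*{\S II.5, Exercise 4(c)}, applied with $g=(1,0)$, $h=(0,1)$, where it reads $[1|\bar{x}]+[\bar{x}|\bar{y}]-[\bar{x}\bar{y}\bar{x}^{-1}|\bar{x}]-[[\bar{x},\bar{y}]|\bar{y}]=[\bar{x}|\bar{y}]-[\bar{y}|\bar{x}]$. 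Since this computation is no easier in the special case than in general, the paper's direct argument is shorter; your version's advantage is modularity --- the explicit formula is quoted only once, at the universal example, and everything else is functorial. To make your proposal a complete proof you would only need to execute that one displayed computation rather than assert it ``can be checked.''
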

\begin{proof}
In  \cite{MR672956}*{\S II.5, Exercise 4(c)}, 
an explicit formula is given for the isomorphism
$\phi\colon R\cap [F,F]/[F,R]\to H_2(G)$.
Specifically, let $\alpha_1,\dots,\alpha_g$, $\beta_1,\dots,\beta_g$ be elements of $F$
such that %
$r=\prod_{i=1}^g [\alpha_i,\beta_i]\in R$.
Then $\phi(r\bmod [F,R])$ is represented by %
\[
\sum_{i=1}^g\{[I_{i-1}|a_i]+[I_{i-1}a_i|b_i]
-[I_{i-1}a_ib_ia_i^{-1}|a_i]-[I_i|b_i]\},
\]
where $a_i,b_i$ denote the images of $\alpha_i,\beta_i$ in $G$,
and $I_i\coloneqq [a_1,b_1]\cdots [a_i,b_i]$.
Now let $g,h\in G$ be commuting elements, then $[s(g),s(h)]\in R$, and
\[\phi([s(g),s(h)]\bmod [F,R])\]
is represented by %
\[
[1|g]+[g|h]-[ghg^{-1}|g]-[[g,h]|h]=[g|h]-[h|g].
\]
Since $\langle g,h\rangle=[g|h]-[h|g]$, the proposition follows.
\end{proof}

\begin{thm}\label{thm:Pontryagin}
Let $M$ be an even Coxeter matrix and  $A_M=F/R$ be the
associated even Artin group.
If $(i,j)\in\mathcal{B}$ then $[a_i, (a_ja_i)_{2n(i,j)-1}]=1$, and we have
\[
\langle a_i, (a_ja_i)_{2n(i,j)-1}\rangle=\alpha_{ij}\in H_2(A_M),
\]
where $\{\alpha_{ij}\mid (i,j)\in\mathcal{B}\}$ is a basis of 
$H_2(A_M)$ as in the previous section.
Consequently, every homology class of $H_2(A_M)$ can be expressed as
a linear combination of Pontryagin products.
\end{thm}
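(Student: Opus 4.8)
The plan is to reduce both assertions to two elementary reassociation identities in the free group $F$ and then to feed them into Proposition~\ref{pro:Pont-Hopf}. Fix $(i,j)\in\mathcal{B}$, abbreviate $n\coloneqq n(i,j)$, and set $w\coloneqq(a_ja_i)_{2n-1}=(a_ja_i)^{n-1}a_j$. Everything hinges on the identities
\[
a_iw=(a_ia_j)^n,\qquad wa_i=(a_ja_i)^n,
\]
which already hold in $F$: each side is the alternating word of length $2n$ in $a_i,a_j$ beginning, respectively, with $a_i$ and with $a_j$.

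First I would settle commutativity. Reading the two identities in $A_M$ and applying the defining relation $(a_ia_j)^n=(a_ja_i)^n$ gives $a_iw=(a_ia_j)^n=(a_ja_i)^n=wa_i$, hence $[a_i,(a_ja_i)_{2n-1}]=1$ in $A_M$. For the Pontryagin product I would next compute $[a_i,w]$ in $F$ itself. From $wa_i=(a_ja_i)^n$ we get $w^{-1}=a_i(a_ja_i)^{-n}$, so together with $a_iw=(a_ia_j)^n$,
\[
[a_i,w]=a_iwa_i^{-1}w^{-1}=(a_ia_j)^n(a_ja_i)^{-n}.
\]
Thus the commutator is exactly the relator normally generating $R$, whose class modulo $[F,R]$ is $\alpha_{ij}$ (this is the congruence $(a_ia_j)^n(a_ja_i)^{-n}\equiv[a_i,a_j]^n\bmod[F,R]$ established in the proof of Theorem~\ref{thm:main}). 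Applying Proposition~\ref{pro:Pont-Hopf} with the section sending each generator to itself—any section yields the same class modulo $[F,R]$—I obtain
\[
\langle a_i,(a_ja_i)_{2n-1}\rangle=[a_i,w]\bmod[F,R]=\alpha_{ij}.
\]

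The concluding sentence is then immediate: since $\{\alpha_{ij}\mid(i,j)\in\mathcal{B}\}$ is a basis of $H_2(A_M)$ by Theorem~\ref{thm:main}, and each basis element has just been realized as a single Pontryagin product, every class of $H_2(A_M)$ is a $\Z$-linear combination of Pontryagin products. I expect the only delicate point to be the bookkeeping in the commutator computation: one must verify that the reassociations collapse $[a_i,w]$ onto the relator $(a_ia_j)^n(a_ja_i)^{-n}$ on the nose, since the honest representative of $\alpha_{ij}$ in $R$ is this relator rather than $[a_i,a_j]^n$ itself, and any spurious discrepancy would survive only if it lay in $[F,R]$. Landing exactly on the relator is what lets me invoke the identification from Theorem~\ref{thm:main} with no correction term.
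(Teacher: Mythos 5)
Your proof is correct and takes essentially the same route as the paper: the paper likewise obtains commutativity from the free-group identity $[a_i,(a_ja_i)_{2n(i,j)-1}]=(a_ia_j)^{n(i,j)}(a_ja_i)^{-n(i,j)}$, applies Proposition~\ref{pro:Pont-Hopf} to represent the Pontryagin product by this commutator modulo $[F,R]$, and identifies the resulting relator class with $\alpha_{ij}$ via the congruence $(a_ia_j)^{n(i,j)}(a_ja_i)^{-n(i,j)}\equiv[a_i,a_j]^{n(i,j)}\bmod [F,R]$ from the proof of Theorem~\ref{thm:main}. Your only addition is to spell out the reassociations $a_iw=(a_ia_j)^{n(i,j)}$ and $wa_i=(a_ja_i)^{n(i,j)}$, which the paper asserts in a single display.
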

\begin{proof}
The first assertion follows from
\begin{equation}\label{eq:comm-rel}
1=(a_ia_j)^{n(i,j)}(a_ja_i)^{-n(i,j)}=[a_i, (a_ja_i)_{2n(i,j)-1}]\in A_M.
\end{equation}
By Proposition \ref{pro:Pont-Hopf}, the Pontryagin product
$\langle a_i, (a_ja_i)_{2n(i,j)-1}\rangle$ is represented by
\[
[a_i, (a_ja_i)_{2n(i,j)-1}]\bmod [F,R]\in R/[F,R].
\]
Since $\alpha_{ij}\coloneqq [a_i,a_j]^{n(i,j)}\bmod [F,R]$ and
\begin{align*}
[a_i, (a_ja_i)_{2n(i,j)-1}]=(a_ia_j)^{n(i,j)}(a_ja_i)^{-n(i,j)}
\equiv [a_i,a_j]^{n(i,j)}\mod [F,R],
\end{align*}
the second assertion follows immediately.
\end{proof}

\begin{rem}
Second homology classes of groups are not always represented as
a linear combination of Pontryagin products.
Topologically, the Pontryagin product $\langle g,h\rangle\in H_{2}(G)$ is, 
up to sign, the image of
the fundamental class $[T^{2}]\in H_{2}(T^{2})$ under the map $(B\phi)_{*}$, where
$B\phi\colon T^{2}\to K(G,1)$ is a continuous map that induces the homomorphism $\phi$ 
on fundamental groups. Such a map $B\phi$ always exists and unique up to homotopy.

Now let $\Sigma_{g}$ be a closed orientable surface of genus $g\geq 2$,
so that $\Sigma_{g}$ is the Eilenberg-MacLane space $K(\pi_1(\Sigma_{g}),1)$.
It can be shown that, for any continuous map 
$f\colon T^{2}\to\Sigma_{g}$, the induced homomorphism
$f_{*}\colon H_{2}(T_{2})\to H_{2}(\Sigma_{g})$ is trivial.
It follows that no nontrivial homology class in $H_2(\pi_1(\Sigma_g))=H_2(\Sigma_g)\cong\Z$
can be represented by a Pontryagin product.
\end{rem}

\section{The second homology of even Coxeter groups}\label{sec:Coxeter}
In this section, we investigate the second integral homology of even Coxeter groups. While Howlett \cite{MR966298} determined the second integral homology for \emph{all} Coxeter groups---showing it to be an elementary abelian 2-group whose rank is computable from the group presentation---we employ a different approach to determine a basis of it in the case of even Coxeter groups.

Let $M$ be an even Coxeter matrix on the index set $[n]$, and let $A_M$ and $W_M$ denote the corresponding even Artin group and even Coxeter group, respectively. Write $A_M = F/R$ as in Section~\ref{sec:Artin}, and similarly let $W_M = F'/R'$, where $F'$ is the free group generated by $\{s_i \mid i \in [n]\}$, and $R'$ is the normal closure of the set
\[
\{s_i^2 \mid i \in [n]\} \cup \{(s_i s_j)^{n(i,j)} (s_j s_i)^{-n(i,j)} \mid (i,j) \in \mathcal{B}\} \subset F',
\]
arising from the relations in \eqref{eq:rel-Coxeter1}.
Define a homomorphism $\hat{\rho} \colon F \to F'$ by $a_i \mapsto s_i$, which lifts the canonical surjection $\rho \colon A_M \to W_M$. This induces the following commutative diagram:
\[
\begin{tikzcd}
H_2(A_M) \arrow[d, "\rho_*"'] \arrow[r, "\cong"] & R/[F,R] \arrow[d, "\hat{\rho}_*"] \\
H_2(W_M) \arrow[r,"\cong"]                           & R'\cap [F',F']/[F',R']               
\end{tikzcd}
\]
where $\hat{\rho}_*$ is the homomorphism induced by $\hat{\rho}$ (see \cite[Exercise II.6.3(b)]{MR672956}).
By Theorem~\ref{thm:main}, the group $R/[F,R]$ is free abelian with basis $\{\alpha_{ij} \mid (i,j) \in \mathcal{B}\}$, where $\alpha_{ij} = [a_i, a_j]^{n(i,j)} \bmod [F,R]$. Applying $\hat{\rho}_*$ to these generators yields the set
\begin{equation} \label{eq:even-Coxeter-closure}
\{[s_i, s_j]^{n(i,j)} \bmod [F', R'] \mid (i,j) \in \mathcal{B}\} \subset \frac{R' \cap [F', F']}{[F', R']}.
\end{equation}
Moreover, the author and Liu \cite{MR3748252} proved that the induced map
$
\rho_* \colon H_2(A_M) \to H_2(W_M)
$
is surjective, and becomes an isomorphism after tensoring with $\mathbb{Z}/2$:
\[
H_2(A_M) \otimes_{\mathbb{Z}} \mathbb{Z}/2 \to H_2(W_M).
\]
Note that $H_2(W_M) = H_2(W_M) \otimes_{\mathbb{Z}} \mathbb{Z}/2$
since $H_2(W_M)$ is an elementary abelian $2$-group.
Hence, the set in \eqref{eq:even-Coxeter-closure} forms a basis for $H_2(W_M) \cong R' \cap [F', F'] / [F', R']$.
\begin{thm}\label{thm:Coxeter-homology}
Let $M$ be an even Coxeter matrix, and let $W_M = F'/R'$ be the associated even Coxeter group. Then $H_2(W_M)$ is an elementary abelian $2$-group with basis
\[
\{[s_i, s_j]^{n(i,j)} \bmod [F', R'] \mid (i,j) \in \mathcal{B} \}.
\]
\end{thm}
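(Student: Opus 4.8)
The plan is to deduce Theorem~\ref{thm:Coxeter-homology} from Theorem~\ref{thm:main} by transporting the explicit basis along the canonical surjection $\rho\colon A_M\to W_M$. The core idea is that although $H_2(A_M)$ is free abelian while $H_2(W_M)$ is only an elementary abelian $2$-group, the map $\rho_*$ is surjective and becomes an isomorphism after reduction mod~$2$; since $H_2(W_M)$ already \emph{equals} its mod~$2$ reduction, these two facts together force the image of the basis of $H_2(A_M)$ to be a basis of $H_2(W_M)$.

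First I would set up the algebraic bookkeeping: write $W_M=F'/R'$ with $F'$ free on $\{s_i\}$ and $R'$ the normal closure of the squares $s_i^2$ together with the braid-type relators, exactly as dictated by the presentation~\eqref{eq:rel-Coxeter1}. I would then introduce the lift $\hat\rho\colon F\to F'$ sending $a_i\mapsto s_i$, covering $\rho$, and check that it carries $R$ into $R'$ and $[F,R]$ into $[F',R']$, so that it descends to $\hat\rho_*\colon R/[F,R]\to R'\cap[F',F']/[F',R']$. Invoking the naturality of Hopf's formula (as in \cite[Exercise II.6.3(b)]{MR672956}) gives the commutative square relating $\rho_*$ on the homology groups to $\hat\rho_*$ on the Hopf-formula models. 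Applying $\hat\rho_*$ to the basis element $\alpha_{ij}=[a_i,a_j]^{n(i,j)}\bmod[F,R]$ produces precisely $[s_i,s_j]^{n(i,j)}\bmod[F',R']$, which is the candidate set~\eqref{eq:even-Coxeter-closure}.

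Next I would import the two structural facts about $\rho_*$ from Akita--Liu \cite{MR3748252}: that $\rho_*\colon H_2(A_M)\to H_2(W_M)$ is surjective, and that the induced map $H_2(A_M)\otimes_\Z\Z/2\to H_2(W_M)$ is an isomorphism. Surjectivity of $\rho_*$ immediately shows that the images of the $\alpha_{ij}$, namely the set~\eqref{eq:even-Coxeter-closure}, generate $H_2(W_M)$. For linear independence over $\Z/2$, I would observe that the $\alpha_{ij}$ form a $\Z$-basis of the free abelian group $H_2(A_M)$, so their reductions form an $\F_2$-basis of $H_2(A_M)\otimes_\Z\Z/2$; the mod~$2$ isomorphism then sends this basis to a basis of $H_2(W_M)$. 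Since $H_2(W_M)$ is an elementary abelian $2$-group, it coincides with its own mod~$2$ reduction, so the images $[s_i,s_j]^{n(i,j)}\bmod[F',R']$ are linearly independent over $\F_2$ and hence constitute a basis.

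The main obstacle is not producing the generating set---that is a direct consequence of naturality and surjectivity---but rather rigorously justifying linear independence, since one is comparing a torsion-free group upstairs with a torsion group downstairs. The delicate point is that independence must be checked \emph{after} reducing mod~$2$, and this is exactly where one must use that the mod~$2$ reduction map is an isomorphism (not merely surjective) together with the fact, quoted from Howlett \cite{MR966298}, that $H_2(W_M)$ is annihilated by~$2$. Once one is careful to phrase everything in terms of the $\F_2$-vector space $H_2(W_M)=H_2(W_M)\otimes_\Z\Z/2$, the argument closes cleanly and the theorem follows.
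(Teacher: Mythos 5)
Your proposal is correct and follows essentially the same route as the paper: the lifted map $\hat\rho\colon F\to F'$ and the naturality of Hopf's formula give the commutative square, the Akita--Liu results (surjectivity of $\rho_*$ and the mod~$2$ isomorphism $H_2(A_M)\otimes_\Z\Z/2\to H_2(W_M)$) combined with Howlett's fact that $H_2(W_M)$ is an elementary abelian $2$-group then show that the images $[s_i,s_j]^{n(i,j)}\bmod[F',R']$ form a basis. Your explicit emphasis on where linear independence must be checked (after mod~$2$ reduction, using that the reduction map is an isomorphism rather than merely surjective) is a slightly more careful articulation of exactly the argument the paper gives.
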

Finally, since Pontryagin products are natural with respect to group homomorphisms, Theorem~\ref{thm:Pontryagin} yields the following result concerning 
Pontryagin products on $W_M$:
\begin{thm}
With the same notation as in the previous theorem, for each 
$(i,j) \in \mathcal{B}$, we have
$
[s_i, (s_j s_i)_{2n(i,j)-1}] = 1,
$
and
\[
\langle s_i, (s_j s_i)_{2n(i,j)-1} \rangle = [s_i, s_j]^{n(i,j)} \bmod [F', R'] \in H_2(W_M).
\]
\end{thm}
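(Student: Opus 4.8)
The plan is to obtain this statement as a corollary of Theorem~\ref{thm:Pontryagin} by invoking the naturality of Pontryagin products under the canonical surjection $\rho\colon A_M\to W_M$, $a_i\mapsto s_i$. First I would record naturality as a general fact: if $\tau\colon G\to G'$ is any group homomorphism and $g,h\in G$ commute, then $\tau(g),\tau(h)$ commute in $G'$ and $\tau_*\langle g,h\rangle=\langle\tau(g),\tau(h)\rangle$. This is immediate from the definition \eqref{eq:Pont-nonabel}: if $\phi\colon\Z\times\Z\to G$ is the homomorphism sending $(1,0),(0,1)$ to $g,h$, then $\tau\circ\phi\colon\Z\times\Z\to G'$ sends them to $\tau(g),\tau(h)$, so that $\langle\tau(g),\tau(h)\rangle=(\tau\phi)_*((1,0)\wedge(0,1))=\tau_*\phi_*((1,0)\wedge(0,1))=\tau_*\langle g,h\rangle$.

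With naturality in hand, both assertions follow. For the first, I would apply $\rho$ to the commuting relation $[a_i,(a_ja_i)_{2n(i,j)-1}]=1$ furnished by Theorem~\ref{thm:Pontryagin}; since $\rho$ is a homomorphism with $\rho(a_i)=s_i$ and $\rho((a_ja_i)_{2n(i,j)-1})=(s_js_i)_{2n(i,j)-1}$, the images commute, giving $[s_i,(s_js_i)_{2n(i,j)-1}]=1$ (this is also just the identity \eqref{eq:comm-rel} read off from the even Coxeter relation \eqref{eq:rel-Coxeter1}). For the second, naturality together with Theorem~\ref{thm:Pontryagin} yields
\[
\langle s_i,(s_js_i)_{2n(i,j)-1}\rangle
=\rho_*\langle a_i,(a_ja_i)_{2n(i,j)-1}\rangle
=\rho_*(\alpha_{ij}),
\]
so it remains to identify $\rho_*(\alpha_{ij})$. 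This is precisely what the commutative diagram of Section~\ref{sec:Coxeter} supplies: under the vertical isomorphisms, $\rho_*$ is computed by $\hat\rho_*\colon R/[F,R]\to R'\cap[F',F']/[F',R']$, and $\hat\rho_*$ carries $\alpha_{ij}=[a_i,a_j]^{n(i,j)}\bmod[F,R]$ to $[s_i,s_j]^{n(i,j)}\bmod[F',R']$. Combining the two facts gives the claimed formula.

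Once naturality is established the argument is essentially routine, so I do not anticipate a genuine obstacle; the only point requiring care is confirming that the map induced by $\rho$ on the two Hopf-formula models coincides with $\hat\rho_*$ and sends $\alpha_{ij}$ to the asserted generator, which the diagram already encodes. As an alternative that avoids naturality, one could prove the theorem directly in the style of Theorem~\ref{thm:Pontryagin}: apply Proposition~\ref{pro:Pont-Hopf} to $W_M=F'/R'$ with the evident section to represent $\langle s_i,(s_js_i)_{2n(i,j)-1}\rangle$ by $[s_i,(s_js_i)_{2n(i,j)-1}]\bmod[F',R']$, and then invoke the congruence $(s_is_j)^{n(i,j)}(s_js_i)^{-n(i,j)}\equiv[s_i,s_j]^{n(i,j)}\bmod[F',R']$, which is the verbatim analogue in $F'$ of the congruence used in the proof of Theorem~\ref{thm:main}.
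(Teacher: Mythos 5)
Your proposal is correct and follows essentially the same route as the paper, which likewise deduces the theorem from Theorem~\ref{thm:Pontryagin} via the naturality of Pontryagin products under $\rho$ and the identification, through the commutative diagram of Section~\ref{sec:Coxeter}, of $\rho_*(\alpha_{ij})$ with $[s_i,s_j]^{n(i,j)}\bmod[F',R']$ (the paper leaves this as a one-sentence remark, which you merely spell out). Your alternative direct argument via Proposition~\ref{pro:Pont-Hopf} applied to $W_M=F'/R'$ is also sound, since the needed congruence in $F'$ follows by pushing the congruence in $F$ forward under $\hat\rho$, which carries $[F,R]$ into $[F',R']$.
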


\begin{rem}
To be precise, Theorem~\ref{thm:Coxeter-homology} is \emph{not} entirely independent of Howlett's result \cite{MR966298}, for two reasons.  
First, we relied on the fact that $H_2(W_M)$ is an elementary abelian $2$-group.  
Second, the result of the author and Liu mentioned above itself depends on a result of Howlett.
\end{rem}

\begin{ack}
The author thanks Takao Satoh for bringing to his attention a result of Hall \cites{MR0038336, MR0103215}. 
He also thanks Sota Takase for explaining the proof of Proposition~\ref{pro:properties-Pont}~(3),
and to the anonymous referee for valuable suggestions that improved the clarity and readability of this paper.
This work was partially supported by JSPS KAKENHI Grant Numbers 20K03600 and 24K06727.
\end{ack}

\begin{bibdiv}
\begin{biblist}

\bib{MR3748252}{article}{
   author={Akita, Toshiyuki},
   author={Liu, Ye},
   title={Second mod 2 homology of Artin groups},
   journal={Algebr. Geom. Topol.},
   volume={18},
   date={2018},
   number={1},
   pages={547--568},
   issn={1472-2747},
   review={\MR{3748252}},
   doi={10.2140/agt.2018.18.547},
}

\bib{garcia-thesis}{thesis}{
   author={Blasco-Garc\'{\i}a, Rub\'{e}n},
   title={Even Artin groups},
   type={Thesis (Ph.D.)--Universidad de Zaragoza},
   date={2019},
   issn={2254-7606},
   eprint={https://zaguan.unizar.es/record/87133}
   url={https://zaguan.unizar.es/record/87133}
}

\bib{MR672956}{book}{
   author={Brown, Kenneth S.},
   title={Cohomology of groups},
   series={Graduate Texts in Mathematics},
   volume={87},
   publisher={Springer-Verlag, New York-Berlin},
   date={1982},
   pages={x+306},
   isbn={0-387-90688-6},
   review={\MR{672956}},
}

\bib{MR2058510}{article}{
   author={Charney, Ruth},
   title={The Deligne complex for the four-strand braid group},
   journal={Trans. Amer. Math. Soc.},
   volume={356},
   date={2004},
   number={10},
   pages={3881--3897},
   issn={0002-9947},
   review={\MR{2058510}},
   doi={10.1090/S0002-9947-03-03425-1},
}

\bib{MR1368655}{article}{
   author={Charney, Ruth},
   author={Davis, Michael W.},
   title={Finite $K(\pi, 1)$s for Artin groups},
   conference={
      title={Prospects in topology},
      address={Princeton, NJ},
      date={1994},
   },
   book={
      series={Ann. of Math. Stud.},
      volume={138},
      publisher={Princeton Univ. Press, Princeton, NJ},
   },
   isbn={0-691-02728-5},
   date={1995},
   pages={110--124},
   review={\MR{1368655}},
}

\bib{MR2672155}{article}{
   author={Clancy, Maura},
   author={Ellis, Graham},
   title={Homology of some Artin and twisted Artin groups},
   journal={J. K-Theory},
   volume={6},
   date={2010},
   number={1},
   pages={171--196},
   issn={1865-2433},
   review={\MR{2672155}},
   doi={10.1017/is008008012jkt090},
}

\bib{MR0769296}{article}{
   author={Edmonds, Allan L.},
   author={Livingston, Charles},
   title={Symmetric representations of knot groups},
   journal={Topology Appl.},
   volume={18},
   date={1984},
   number={2-3},
   pages={281--312},
   issn={0166-8641},
   review={\MR{0769296}},
   doi={10.1016/0166-8641(84)90015-4},
}

\bib{MR2582056}{article}{
   author={Gadgil, Siddhartha},
   author={Kachari, Geetanjali},
   title={Cup products for groups and commutators},
   journal={J. Group Theory},
   volume={12},
   date={2009},
   number={6},
   pages={895--900},
   issn={1433-5883},
   review={\MR{2582056}},
   doi={10.1515/JGT.2009.021},
}

\bib{MR0038336}{article}{
   author={Hall, Marshall, Jr.},
   title={A basis for free Lie rings and higher commutators in free groups},
   journal={Proc. Amer. Math. Soc.},
   volume={1},
   date={1950},
   pages={575--581},
   issn={0002-9939},
   review={\MR{0038336}},
   doi={10.2307/2032282},
}

\bib{MR0103215}{book}{
   author={Hall, Marshall, Jr.},
   title={The theory of groups},
   publisher={The Macmillan Co., New York, N.Y.},
   date={1959},
   pages={xiii+434},
   review={\MR{0103215}},
}

\bib{MR6510}{article}{
   author={Hopf, Heinz},
   title={Fundamentalgruppe und zweite Bettische Gruppe},
   language={German},
   journal={Comment. Math. Helv.},
   volume={14},
   date={1942},
   pages={257--309},
   issn={0010-2571},
   review={\MR{6510}},
   doi={10.1007/BF02565622},
}

\bib{MR966298}{article}{
   author={Howlett, Robert B.},
   title={On the Schur multipliers of Coxeter groups},
   journal={J. London Math. Soc. (2)},
   volume={38},
   date={1988},
   number={2},
   pages={263--276},
   issn={0024-6107},
   review={\MR{966298 (90e:20010)}},
   doi={10.1112/jlms/s2-38.2.263},
}

\bib{MR0942427}{article}{
   author={Johnson, Dennis},
   author={Livingston, Charles},
   title={Peripherally specified homomorphs of knot groups},
   journal={Trans. Amer. Math. Soc.},
   volume={311},
   date={1989},
   number={1},
   pages={135--146},
   issn={0002-9947},
   review={\MR{0942427}},
   doi={10.2307/2001020},
}

\bib{MR1775387}{article}{
   author={Kim, Se-Goo},
   title={Virtual knot groups and their peripheral structure},
   journal={J. Knot Theory Ramifications},
   volume={9},
   date={2000},
   number={6},
   pages={797--812},
   issn={0218-2165},
   review={\MR{1775387}},
   doi={10.1142/S0218216500000451},
}

\bib{MR2341312}{article}{
   author={Kurlin, V.},
   author={Lines, D.},
   title={Peripherally specified homomorphs of link groups},
   journal={J. Knot Theory Ramifications},
   volume={16},
   date={2007},
   number={6},
   pages={719--740},
   issn={0218-2165},
   review={\MR{2341312}},
   doi={10.1142/S0218216507005440},
}

\bib{MR2497781}{article}{
   author={Paris, Luis},
   title={Braid groups and Artin groups},
   conference={
      title={Handbook of Teichm\"uller theory. Vol. II},
   },
   book={
      series={IRMA Lect. Math. Theor. Phys.},
      volume={13},
      publisher={Eur. Math. Soc., Z\"urich},
   },
   date={2009},
   pages={389--451},
   review={\MR{2497781}},
   doi={10.4171/055-1/12},
}

\bib{MR3205598}{article}{
   author={Paris, Luis},
   title={$K(\pi,1)$ conjecture for Artin groups},
   language={English, with English and French summaries},
   journal={Ann. Fac. Sci. Toulouse Math. (6)},
   volume={23},
   date={2014},
   number={2},
   pages={361--415},
   issn={0240-2963},
   review={\MR{3205598}},
   doi={10.5802/afst.1411},
}

\bib{MR3207280}{article}{
   author={Paris, Luis},
   title={Lectures on Artin groups and the $K(\pi,1)$ conjecture},
   conference={
      title={Groups of exceptional type, Coxeter groups and related
      geometries},
   },
   book={
      series={Springer Proc. Math. Stat.},
      volume={82},
      publisher={Springer, New Delhi},
   },
   date={2014},
   pages={239--257},
   review={\MR{3207280}},
   doi={10.1007/978-81-322-1814-2\_13},
}

\bib{MR1295551}{article}{
   author={Salvetti, Mario},
   title={The homotopy type of Artin groups},
   journal={Math. Res. Lett.},
   volume={1},
   date={1994},
   number={5},
   pages={565--577},
   issn={1073-2780},
   review={\MR{1295551}},
   doi={10.4310/MRL.1994.v1.n5.a5},
}

\end{biblist}
\end{bibdiv}

\end{document}